\documentclass{amsart}

\usepackage{amsmath}
\usepackage{amsthm}

\usepackage{graphicx} % Required for inserting images
\usepackage{imakeidx}
\usepackage{hyperref}
\usepackage[nameinlink]{cleveref}
\hypersetup{
    colorlinks = true
}

% Injective arrows
\usepackage{mathtools}

% Remove paragraph indent + add space between paragraphs
\usepackage{parskip}
 
% Denote vanishing entities by diagonal strikethrough
\usepackage{cancel}

% AMS packages
\usepackage{amsmath, amssymb, amsthm}

%write tensor indices nicely
\usepackage{tensor}

%draw diagrams
\usepackage{tikz-cd}

%math bold font 
\usepackage{bm}

% allow roman numerals toggling
\usepackage{enumerate}   

% Commenting
\newcommand{\mycomment}[1]{}

% Symbols

\newcommand{\C}{\mathbb{C}}

\newcommand{\inj}{\xhookrightarrow{}}
\renewcommand{\r}{\rightarrow}
\newcommand{\p}{\partial}

% GR Symbols

% Equations
\newcommand{\eqn}[1]{\begin{align*}#1\end{align*}}

%inner product
\newcommand{\ip}[2]{\left\langle{#1},{#2}\right\rangle}

%norm

%second fundamental form

%widehat

% operators

\DeclareMathOperator{\Sym}{Sym}

\DeclareMathOperator{\Ric}{Ric}

\DeclareMathOperator{\sgn}{sgn}

\DeclareMathOperator{\hash}{\sharp}
\DeclareMathOperator{\barhash}{\bar{\sharp}}
\DeclareMathOperator{\hashbarhash}{\sharp\bar{\sharp}}

%wide bar

% Make warning triangle
\usepackage{stackengine}
\usepackage{scalerel}
\usepackage{xcolor}
\usepackage{mathrsfs}

%%%% Letter Symbols

%
% Calligraphic
%
\def\cal{\mathcal}

\def\cR{{\cal R}}

%
% Script
%

%
% Gothic
%

\def\frakR{{\mathfrak R}}

% Theorem-like 
\theoremstyle{plain}
\newtheorem{thm}{Theorem}[section]
\newtheorem{cor}[thm]{Corollary}
\newtheorem{prop}[thm]{Proposition}
\newtheorem{lemma}[thm]{Lemma}

\theoremstyle{remark}
\newtheorem{rem}{Remark}[section]

%emph setting
\DeclareEmphSequence{\bfseries\boldmath}

%remove space before proofs
\makeatletter
\renewenvironment{proof}[1][\proofname]{\par
  \vspace{-\topsep}% remove the space after the theorem
  \pushQED{\qed}%
  \normalfont
  \topsep0pt \partopsep0pt % no space before
  \trivlist
  \item[\hskip\labelsep
        \itshape
    #1\@addpunct{.}]\ignorespaces
}{%
  \popQED\endtrivlist\@endpefalse
  \addvspace{0pt plus 0pt} % some space after
}
\makeatother

%Set equations numbers to sections
\numberwithin{equation}{section}

\title{Kohn--Rossi cohomology and the Bochner technique}

\begin{document}

\begin{abstract}
    We prove a vanishing theorem of Betti numbers on compact, strictly pseudoconvex pseudohermitian manifolds with non-negative curvature operator. The proof is by an application of the Bochner technique to the setting of CR manifolds.
\end{abstract}

\keywords{Kohn--Rossi cohomology, Pseudohermitian manifold, Hodge theory, Bochner technique}
\subjclass{Primary 32V05; Secondary 53C15.}

\author{Alex Tao}
\address{Department of Mathematics, University of California, Riverside, 900 University Ave, Riverside, CA 92521}
\email{alex.tao@email.ucr.edu}

\maketitle

\section{Introduction}
The literature of curvature conditions imposing topological restrictions dates back to Gauss--Bonnet theorem on compact surfaces. In Riemannian geometry, Bochner made the key observation that the rough Laplacian of a harmonic 1-form can be expressed in terms of the Ricci curvature, which led to the first application of the Bochner technique \cite{bochner46}: if a compact Riemannian manifold has positive Ricci curvature, then the first Betti number vanishes. Meyer and Gallot--Meyer \cite{meyer71,GM75} proceeded to use this technique to produce upper bounds for the Betti numbers of Riemannian manifolds under assumptions on the curvature operator. Based on the conjectures of Nishikawa \cite{Nishikawa86}, recent developments have been made on curvature operators of the second kind \cite{CGT23,NPWW23, FL24}. For a survey of results and generalisations of Bochner technique in Riemannian geometry, one may consult Chapter 9 of \cite{petersen} and \cite{PW21}.

A similar flavour of geometric results are sphere theorems; see \cite{berger03} for a historical account. More recently, the Ricci flow was used by B\"ohm--Wilking \cite{BW08} to prove that manifolds with 2-positive curvature are diffeomorphic to spaceforms, and by Brendle--Schoen \cite{BS08} to prove the quarter-pinched sphere theorem. An analogue of the Riemannian sphere theorems in the K\"ahler setting was conjectured by Frankel \cite{frankel61}: K\"ahler manifolds with positive bisectional curvature are bihomolomorphic to complex projective spaces. The Frankel conjecture was later proved by Mori \cite{Mori79} using algebraic geometry and separately by Siu--Yau \cite{SiuYau80} using harmonic maps. More recently, He--Sun \cite{HS16} proved the Sasakian Frankel conjecture using Ricci flow methods. 

In this paper, we study the Kohn--Rossi cohomology groups in CR geometry, which are analogues of the Dolbeault cohomology groups in complex geometry. Our results extend the Bochner technique to pseudohermitian manifolds and are related to the CR Frankel conjecture; they should be compared to its partial resolution in \cite{Chang14}.  

The main theorem of the paper is the following:
\begin{thm}\label{mainthm}
    Let $(M^{2n+1}, T^{1,0}, \theta)$ be a compact, strictly pseudoconvex pseudohermitian manifold with non-negative pseudohermitian curvature operator $\frakR$. Let $0\leq p\leq n+1$ and $0<q<n$ be integers such that $p+q\notin \{n,n+1\}$. Then every harmonic representative of the $(p,q)$-th Kohn--Rossi cohomological class is parallel and lies in the kernel of $\frakR$. In particular, we have the Betti number bound $b^{p,q}\leq \min\limits_{x\in M}\dim\ker\frakR_x$.
\end{thm}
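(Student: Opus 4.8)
The plan is to run the Bochner technique adapted to the Tanaka--Webster connection in three stages: reduce to harmonic representatives, derive a Weitzenb\"ock identity for the Kohn Laplacian, and integrate against the pseudohermitian volume form.

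\emph{Reduction to harmonic forms.} Since $M$ is compact and strictly pseudoconvex and $0<q<n$, Kohn's subelliptic estimate shows that the Kohn Laplacian $\Box_b=\bar\partial_b\bar\partial_b^{\,*}+\bar\partial_b^{\,*}\bar\partial_b$ on $(p,q)$-forms is hypoelliptic with closed range and finite-dimensional kernel, so there is a Hodge decomposition and a canonical isomorphism $H^{p,q}_{KR}(M)\cong\cH^{p,q}:=\ker\bigl(\Box_b\big|_{\Lambda^{p,q}}\bigr)$. Thus it suffices to prove that every $\Box_b$-harmonic $(p,q)$-form $\alpha$ — equivalently, every form with $\bar\partial_b\alpha=0$ and $\bar\partial_b^{\,*}\alpha=0$ — is parallel for the Tanaka--Webster connection $\nabla$ and satisfies $\frakR\alpha\equiv 0$.

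\emph{The Weitzenb\"ock identity.} I would next establish, in a local admissible coframe, a Bochner formula of the shape
\[
\ip{\Box_b\alpha}{\alpha}=\tfrac12\Delta_b|\alpha|^2+|\nabla_b\alpha|^2+\ip{\frakR\alpha}{\alpha}+\cT(\alpha),
\]
where $\Delta_b$ is the horizontal sub-Laplacian in divergence form, $\nabla_b$ the horizontal part of $\nabla$, $\frakR$ the pseudohermitian curvature operator acting as a self-adjoint endomorphism of $\Lambda^{p,q}$ (assembled from the Tanaka--Webster curvature so that $\frakR\geq 0$ makes this term non-negative), and $\cT(\alpha)$ the terms involving the pseudohermitian torsion and the Reeb derivative $\nabla_T\alpha$ that arise from the non-integrability of the horizontal distribution. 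This is obtained by expressing $\bar\partial_b$ and $\bar\partial_b^{\,*}$ through $\nabla$, commuting the resulting second-order operators, and identifying the commutator terms with the Tanaka--Webster curvature, the $d\theta$-terms in the structure equations being exactly what feeds $\cT$. The restrictions $0<q<n$ and $p+q\notin\{n,n+1\}$ enter here: they ensure that the algebraic operators $\theta\wedge(\cdot)$ and its adjoint — the Lefschetz-type maps on the CR bigraded exterior bundle — contribute no uncontrolled zeroth-order term when paired against a closed and coclosed $\alpha$, so that the full curvature contribution organises into the single non-negative operator $\frakR$. I expect this step — tracking which torsion and Reeb contributions survive and showing that, in exactly the admissible bidegrees, they either cancel or assemble into a horizontal divergence while the remaining curvature term has a definite sign — to be the main obstacle.

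\emph{Integration and conclusion.} Integrating the identity over $M$ against $\theta\wedge(d\theta)^n$, the left-hand side vanishes because $\alpha$ is $\Box_b$-harmonic, $\int_M\Delta_b|\alpha|^2=0$ since $\Delta_b$ is a divergence, and $\int_M\cT(\alpha)=0$ after a further integration by parts using $\bar\partial_b\alpha=\bar\partial_b^{\,*}\alpha=0$. This leaves $\int_M\bigl(|\nabla_b\alpha|^2+\ip{\frakR\alpha}{\alpha}\bigr)=0$, and since both integrands are non-negative under the hypothesis, both vanish: $\nabla_b\alpha\equiv 0$ and $\ip{\frakR\alpha}{\alpha}\equiv 0$, hence $\frakR_x\alpha_x=0$ for all $x$. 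A short further argument — inserting a horizontal pair $(X,Y)$ with $[X,Y]$ having non-zero Reeb component into $\nabla_X\nabla_Y\alpha-\nabla_Y\nabla_X\alpha-\nabla_{[X,Y]}\alpha=R(X,Y)\alpha$ and combining $\nabla_b\alpha=0$ with $\frakR\alpha=0$ — promotes horizontal parallelism to $\nabla\alpha\equiv 0$. Finally, for the Betti bound, fix $x\in M$: on the connected manifold $M$ a parallel form is determined by its value at $x$, so the evaluation $\alpha\mapsto\alpha_x$ embeds $\cH^{p,q}$ into $\ker\frakR_x\subseteq\Lambda^{p,q}_x$; hence $b^{p,q}=\dim\cH^{p,q}\le\dim\ker\frakR_x$, and minimising over $x$ gives the stated bound.
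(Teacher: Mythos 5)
Your overall architecture --- reduce to $\Box_b$-harmonic representatives, write a Weitzenb\"ock identity, integrate over $M$ against $\theta\wedge d\theta^n$, and read off vanishing of the gradient and curvature terms --- is exactly the paper's strategy. But there is a genuine gap at the step you yourself flag as the main obstacle, and it is not the torsion bookkeeping: it is your assertion that the zeroth-order curvature contribution in the Weitzenb\"ock formula ``organises into the single non-negative operator $\frakR$'' so that $\ip{\frakR\alpha}{\alpha}\geq 0$ is immediate from the hypothesis. The curvature operator $\frakR$ in the theorem is a Hermitian form on $\Sym^2\bigl((T^{1,0})^*\wedge(T^{0,1})^*\bigr)$, not an endomorphism of $\Lambda^{p,q}$, and the actual Weitzenb\"ock identity (Proposition~\ref{mainprop}, imported from Case) produces a specific linear combination $-NR\hashbarhash\omega-M_1\Ric\hash\omega-M_2\Ric\barhash\omega$ of first- and second-contraction curvature actions on $(p,q)$-forms. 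Non-negativity of that combination under the hypothesis $\frakR\geq 0$ is precisely the new content of the paper (Lemma~\ref{mainlemma}): one diagonalizes $\frakR$ as in \eqref{diag}, uses the Bianchi-type symmetries \eqref{diagsymm}, the trace-free property of elements of $\cR^{p,q}$, and the antisymmetrization identity of Lemma~\ref{antisymm} to rewrite the combination eigenvalue-by-eigenvalue as a sum of squares $M_1(q+1)^2p|\tilde\Omega|^2+M_2(p+1)^2q|\Omega|^2$. Without this Gallot--Meyer-style algebraic lemma your argument does not close: a non-negative curvature operator does not a priori make the Weitzenb\"ock term non-negative, and the equality analysis that yields $\frakR\omega=0$ also comes out of that sum-of-squares expression rather than from a tautological pairing.

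Two smaller omissions. First, the coefficient of $|\nabla_b\omega|^2$ in the relevant identity is $q-1$, which vanishes when $q=1$; the paper must invoke the alternative formula \eqref{13.11} (whose $\nabla_b^*\nabla_b$ coefficient is $q/n>0$) to conclude $\nabla_b\omega=0$ in that case, and your sketch does not account for any degenerate coefficient. Second, the Weitzenb\"ock formulas hold only for $p+q\leq n-1$, while the theorem allows $p+q$ up to $2n$ (excluding $n,n+1$); the remaining bidegrees are handled by the Hodge-star duality $H^{p,q}\cong H^{n+1-p,n-q}$, which your proposal does not mention. Your closing step promoting horizontal parallelism to full parallelism is not needed for the Betti bound: since the horizontal distribution is bracket-generating, $\nabla_b\omega=\overline{\nabla}_b\omega=0$ already determines $\omega$ from its value at one point, which is all the dimension count requires.
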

In the special case of positive curvature operator, we have:
\begin{cor}
    Let $(M^{2n+1}, T^{1,0}, \theta)$ be a compact, strictly pseudoconvex pseudohermitian manifold with non-negative pseudohermitian curvature operator $\frakR$. Let $0\leq p\leq n+1$ and $0<q<n$ be integers such that $p+q\notin \{n,n+1\}$. If $\frakR$ is strictly positive at a point, then $b^{p,q}$ vanishes.
\end{cor}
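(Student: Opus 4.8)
The plan is to run the Bochner technique for the Kohn Laplacian $\Box_b:=\bar\partial_b\bar\partial_b^{*}+\bar\partial_b^{*}\bar\partial_b$ built from the Tanaka--Webster connection $\nabla$ of $(M,T^{1,0},\theta)$, in the spirit of the Riemannian argument of Gallot--Meyer but keeping the pseudohermitian torsion and the transverse (Reeb) terms under control. \textbf{Step 1 (Hodge theory).} For $0<q<n$ on a compact, strictly pseudoconvex $M$, Kohn's subelliptic estimates make $\Box_b$ on $(p,q)$-forms hypoelliptic with closed range, so the Kohn--Rossi group $H^{p,q}(M)$ is isomorphic to the finite-dimensional space $\mathcal H^{p,q}:=\ker\Box_b|_{\Lambda^{p,q}}$, and $\phi\in\mathcal H^{p,q}$ if and only if $\bar\partial_b\phi=0$ and $\bar\partial_b^{*}\phi=0$. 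It therefore suffices to show that every $\phi\in\mathcal H^{p,q}$ is $\nabla$-parallel and lies fibrewise in $\ker\frakR$.

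\textbf{Step 2 (a CR Weitzenb\"ock formula).} Using $\nabla$ we derive an identity of the schematic shape
\[
2\Box_b \;=\; \Delta_b \;+\; \mathcal K \;+\; \mathcal T,
\]
where $\Delta_b=\nabla^{*}\nabla$ is the Bochner sub-Laplacian (the trace of $\nabla^{2}$ over the Levi distribution), $\mathcal K$ is the zeroth-order Weitzenb\"ock curvature endomorphism of $\Lambda^{p,q}$ assembled from the pseudohermitian curvature, and $\mathcal T$ gathers the terms with no Riemannian counterpart: quadratic contractions of $\phi$ against the pseudohermitian torsion $A_{\alpha\beta}$ together with a transverse term built from $\nabla_T\phi$. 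This is a CR analogue of the Bochner--Kodaira--Nakano identity; carrying it out and isolating $\mathcal T$ with the correct constants is the first technical task.

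\textbf{Step 3 (sign analysis).} Pairing with $\phi$ and integrating over the closed manifold $M$, and using $\bar\partial_b\phi=\bar\partial_b^{*}\phi=0$, gives
\[
0 \;=\; \int_M\ip{2\Box_b\phi}{\phi} \;=\; \int_M|\nabla\phi|^{2} \;+\; \int_M\ip{\mathcal K\phi}{\phi} \;+\; \int_M\ip{\mathcal T\phi}{\phi}.
\]
Two facts are required. (a) $\mathcal K\geq 0$ pointwise whenever $\frakR\geq 0$: this is the CR version of the Gallot--Meyer/Kostant lemma, namely that $\mathcal K$ on $\Lambda^{p,q}$ is a non-negative linear combination of the values of $\frakR$ on decomposable elements; the fibrewise computation is the familiar Riemannian one, carried out on the Levi distribution equipped with its complex structure, so this step is essentially routine once $\mathcal K$ has been identified. (b) $\int_M\ip{\mathcal T\phi}{\phi}\geq 0$, and in fact we expect it to vanish: this is where the hypothesis $p+q\notin\{n,n+1\}$ is used. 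After integrating the transverse part by parts (using that $d\theta$ is $\nabla$-parallel and $M$ is closed) its contribution is anticipated to carry the factors $n-p-q$ and $(n+1)-(p+q)$, hence to have the right sign exactly away from the two middle bidegrees, while the remaining torsion-quadratic part is controlled in the same range. Making (b) precise is the main obstacle of the proof.

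\textbf{Step 4 (conclusion).} Granting (a) and (b), each of the three non-negative summands above vanishes. Thus $\nabla\phi=0$, so $\phi$ is parallel; and $\int_M\ip{\mathcal K\phi}{\phi}=0$ together with $\mathcal K\geq 0$ forces $\mathcal K\phi\equiv 0$, which by the construction of $\mathcal K$ from $\frakR$ places $\phi_x$ in $\ker\frakR_x$ for every $x$. For the Betti bound, note that $M$ is connected, so the evaluation map $\mathcal H^{p,q}\to\Lambda^{p,q}_x$, $\phi\mapsto\phi_x$, is injective --- a parallel section vanishing at one point vanishes identically by parallel transport along curves --- and its image is contained in $\ker\frakR_x$; hence $b^{p,q}=\dim\mathcal H^{p,q}\leq\dim\ker\frakR_x$ for every $x$, i.e.\ $b^{p,q}\leq\min_{x\in M}\dim\ker\frakR_x$. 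The corollary is immediate: if $\frakR_x$ is positive definite at some $x$ then the induced endomorphism of $\Lambda^{p,q}_x$ is invertible, so $\ker\frakR_x=0$ and $b^{p,q}=0$.
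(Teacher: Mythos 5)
The corollary itself is an immediate consequence of Theorem \ref{mainthm}: positivity of $\frakR$ at a point $x$ gives $\dim\ker\frakR_x=0$, and the bound $b^{p,q}\le\min_{x\in M}\dim\ker\frakR_x$ then forces $b^{p,q}=0$. Your final two sentences contain exactly this deduction, and they are correct; had you simply cited the theorem, you would be done. The problem is that you instead set out to re-derive the theorem, and the two hardest steps of that derivation are announced rather than proved. Your Weitzenb\"ock identity in Step 2 is only schematic, and Step 3(b) --- showing that the torsion/Reeb term $\mathcal T$ contributes non-negatively after integration --- is explicitly left open (``the main obstacle of the proof''). The paper avoids this obstacle entirely: it works on the Rumin-type spaces $\cR^{p,q}$ with the modified Kohn Laplacian $\Box_b=\tfrac{n-p-q}{n-p-q+1}\overline{\p}_b\overline{\p}_b^*+\overline{\p}_b^*\overline{\p}_b$, for which Case's Proposition 13.11 and Corollary 13.13 give two \emph{exact} Weitzenb\"ock formulas containing no torsion or $\nabla_T$ terms at all --- only $\nabla_b^*\nabla_b$, $\overline{\nabla}_b^*\overline{\nabla}_b$, $\p_b^*\p_b+\overline{\p}_b^*\overline{\p}_b$, and the curvature operators $R\hashbarhash$, $\Ric\hash$, $\Ric\barhash$. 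Moreover, the hypothesis $p+q\notin\{n,n+1\}$ does not enter through a sign analysis of torsion as you anticipate: $p+q\ne n+1$ (with the duality $H^{p,q}\cong H^{n+1-p,n-q}$) reduces to the range $p+q\le n-1$, where the coefficient $\tfrac{1}{n-p-q}$ obtained by combining the two formulas is positive.

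The second gap is in Step 3(a), which you describe as ``essentially routine.'' It is not a transplanted Riemannian computation: the pointwise inequality
$\ip{-NR\hashbarhash\omega-M_1\Ric\hash\omega-M_2\Ric\barhash\omega}{\omega}\ge 0$
of Lemma \ref{mainlemma} uses, beyond the diagonalization of $\frakR$ and the symmetries $R_{\alpha\bar\beta\mu\bar\nu}=R_{\alpha\bar\nu\mu\bar\beta}=R_{\mu\bar\nu\alpha\bar\beta}$, the fact that elements of $\cR^{p,q}$ with $p+q\le n$ are \emph{trace-free}; this is what permits the contractions $\tensor{E}{_\alpha^\mu}\omega_{\mu A'\bar\nu\bar B'}$ to be rewritten as antisymmetrizations and the whole expression to be recognized as the sum of squares $M_1(q+1)^2p|\tilde\Omega|^2+M_2(p+1)^2q|\Omega|^2$. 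For a general element of $\Lambda^{p,q}$ this identity fails, so the lemma cannot be obtained by running the Gallot--Meyer argument fibrewise on the Levi distribution without invoking the structure of the Kohn--Rossi complex. In short, the logic of the corollary is fine, but the analytic core on which your version of the Betti bound rests is not supplied.
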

\begin{proof}
    If $\frakR$ is strictly positive at a point $x\in M$, then $\dim\ker\frakR_x =0$. The conclusion follows immediately from the Betti number bound in Theorem \ref{mainthm}.
\end{proof}
\section{Background}
In this section, we introduce the basic definitions and notation for the paper. One may consult \cite{Takeuchi24} for a brief introduction, or \cite{case} for a comprehensive treatment.
\subsection{CR and Pseudohermitian Manifolds}
A \emph{(nondegenerate) CR manifold} is a pair $(M^{2n+1}, T^{1,0})$ consisting of a real  $(2n+1)$-dimensional manifold $M^{2n+1}$ and a complex rank $n$ distribution $T^{1,0}$ in the complexified tangent bundle $T_\C M$ such that
\begin{itemize}
  \item[(i)] $T^{1,0} \cap T^{0,1} = \{0\}$, where $T^{0,1} := \overline{T^{1,0}}$;
  \item[(ii)] $[T^{1,0}, T^{1,0}] \subset T^{1,0}$; and
  \item[(iii)] if $\theta$ is a real one-form on an open set $U \subset M$ and $H:=\ker \theta = \operatorname{Re}(T^{1,0} \oplus T^{0,1})|_U$, then
  \begin{equation}
    L(Z, W) := -i\, d\theta(Z, \overline{W})
  \end{equation}
  defines a nondegenerate hermitian inner product on $T^{1,0}|_U$.
\end{itemize}
We always assume $\theta$ can be chosen to be globally defined. Given a non-degenerate CR manifold, the 1-form $\theta$ described as above is called the \emph{contact form} and $L$ is called the \emph{Levi form}.

 A \emph{pseudohermitian manifold} is an orientable CR manifold with a distinguished contact form $\theta$. A pseudohermitian manifold is \emph{strictly pseudoconvex} if the Levi form of $\theta$ is positive definite. Given a pseudohermitian manifold, the unique vector field $T$ satisfying $\theta(T)=1$ and $d\theta(T,-)=0$ is called the \emph{Reeb vector field}. Given a local contact form $\theta$, we can extend $\theta$ to a local coframe consisting of a set of complex-valued one-forms $\{\theta, \theta^1,\ldots, \theta^n, \theta^{\bar{1}},\ldots, \theta^{\bar{n}}\}$, where $\theta^{\bar{\beta}}:=\overline{\theta^\beta}$ and each $\theta^{\alpha}$ annihilates the Reeb vector field and the space $T^{0,1}$. A local coframe satisfying these properties is called \emph{admissible}, and with such a choice of coframe, we can write the Levi form $L$ locally as $h_{\alpha\bar{\beta}}$, where
    \[
        d\theta= ih_{\alpha\bar{\beta}}\,\theta^\alpha\wedge\theta^{\bar{\beta}}.
    \]
By non-degeneracy of the Levi form, we can use $h_{\alpha\bar{\beta}}$ to raise and lower indices. 

\subsection{Tanaka--Webster Connection and Curvature}
Let $(M^{2n+1},T^{1,0}, \theta)$ be a pseudohermitian manifold with an admissible coframe $\{\theta, \theta^1,\ldots, \theta^n, \theta^{\bar{1}},\ldots, \theta^{\bar{n}}\}$. Let $\{T, Z_1,\ldots, Z_n, Z_{\bar{1}},\ldots, Z_{\bar{n}}\}$ denote the dual coframe. 
Webster \cite{webster} introduced connection forms $\tensor{\omega}{_\alpha^\beta}$ via the structure equations
    \[
        d\theta^\alpha = \theta^\mu\wedge \tensor{\omega}{_\mu^\alpha}\mod{\theta\wedge\theta^{\bar{\beta}}}
    \]
and defined a unique connection, the \emph{Tanaka--Webster connection} $\nabla$, on $TM\otimes \C$ by
    \[
        \nabla T =0,\qquad \nabla Z_\alpha=\tensor{\omega}{_\alpha^\mu} \otimes Z_\mu, \qquad \nabla Z_{\bar{\beta}} = \tensor{\omega}{_{\bar{\beta}}^{\bar{\nu}}}\otimes Z_{\bar{\nu}},
    \]
where $\tensor{\omega}{_{\bar{\beta}}^{\bar{\nu}}}:=\overline{\tensor{\omega}{_{\beta}^{\nu}}}$. Webster then considered the curvature two-forms
    \[
        \tensor{\Omega}{_\alpha^\gamma} : = d\tensor{\omega}{_\alpha^\gamma}-\tensor{\omega}{_\alpha^\mu}\wedge\tensor{\omega}{_\mu^\gamma},
    \]
and defined functions $R_{\alpha\bar{\beta}\mu\bar{\nu}}$ by
    \[
        \Omega_{\alpha\bar{\beta}} = R_{\alpha\bar{\beta}\mu\bar{\nu}}\,\theta^\mu\wedge\theta^{\bar{\nu}} \mod{\theta\wedge\theta^\alpha, \theta\wedge\theta^{\bar{\beta}}, \theta^\alpha \wedge \theta^\mu, \theta^{\bar{\beta}}\wedge\theta^{\bar{\nu}}}.
    \]
The \emph{pseudohermitian curvature tensor} is given by
    \[
        R_{\alpha\bar{\beta}\mu\bar{\nu}}\,\theta^\alpha\otimes\theta^{\bar{\beta}}\otimes\theta^\mu\otimes\theta^{\bar{\nu}},
    \]
and is analogous to the Riemann curvature tensor for Riemannian manifolds. It satisfies the symmetries
    \begin{equation}  R_{\alpha\bar{\beta}\mu\bar{\nu}}=R_{\alpha\bar{\nu}\mu\bar{\beta}}=R_{\mu\bar{\nu}\alpha\bar{\beta}}.\label{symmetries}
    \end{equation} 
Given a pseudohermitian curvature tensor, one can use the symmetries in \eqref{symmetries} to define an associated hermitian form, the \emph{pseudohermitian curvature operator} $\mathfrak{R}$, on the bundle $\Sym^2((T^{1,0})^*\wedge (T^{0,1})^*)$. Explicitly, we define $\frakR$ to be the map 
    \begin{align}\label{pct}        
        \frakR:\left((T^{1,0})^*\wedge (T^{0,1})^*\right)\otimes \left((T^{1,0})^* \wedge (T^{0,1})^*\right)  &\r \C \nonumber\\  
        \omega\otimes \eta &\mapsto -R_{\alpha\bar{\beta}\mu\bar{\nu}}\omega^{\bar{\beta}\alpha}\bar{\eta}^{\bar{\nu}\mu},
    \end{align}
    where $\bar{\eta}^{\bar{\nu}\mu}= -\overline{\eta^{\bar{\mu}\nu}}$.
The finite dimensional spectral theorem allows us to choose an orthonormal basis consisting of real eigenvectors $\{E^{(i)}\}$ which diagonalizes the operator and yield the formula
    \begin{equation}
        R_{\alpha\bar{\beta}\mu\bar{\nu}} = -\sum_i\lambda_i E^{(i)}_{\alpha\bar{\beta}}E^{(i)}_{\mu\bar{\nu}},\label{diag}
    \end{equation}
    where $\lambda_i$ are the corresponding real eigenvalues of $E^{(i)}$. Applying the symmetries in \eqref{symmetries} to the orthogonal decomposition in \eqref{diag}, we have
        \begin{equation}
             \sum_i\lambda_i E_{\alpha\bar{\beta}}E_{\mu\bar{\nu}}= 
             \sum_i\lambda_i E_{\alpha\bar{\nu}}E_{\mu\bar{\beta}}= 
             \sum_i\lambda_i E_{\mu\bar{\beta}}E_{\alpha\bar{\nu}},\label{diagsymm}
    \end{equation}
    where for readability, we omit the superscript on the eigenvectors and will do so henceforth.

    We define the \emph{pseudohermitian Ricci curvature} to be the contraction of the pseudohermitian curvature tensor in the last two indices, $R_{\alpha\bar{\beta}}\,\theta^\alpha\otimes\theta^{\bar{\beta}}$. And the twice contracted pseudohermitian curvature tensor gives us the \emph{pseudohermitian scalar curvature}, the function $R=\tensor{R}{_\alpha^\alpha}$.
    \begin{rem}\label{signremark}
        One can readily check that $-R_{\alpha\bar{\beta}\mu\bar{\nu}}h^{\alpha\bar{\beta}}\overline{h}^{\mu\bar{\nu}}=R$ so that the choice of negative sign of the curvature operator in \eqref{pct} gives us consistent sign between eigenvalues of $\frakR$ and $R$. For this reason, we shall say that $\mathfrak{R}$ is \emph{non-negative} if $\lambda_i \geq 0$ for every eigenvalue $\lambda_i$ of $\mathfrak{R}$.
    \end{rem}

\subsection{Differential Forms and Kohn--Rossi Cohomology}
We briefly review the formulation in Sections 2.1 and 2.2 of \cite{case} and stress that the Kohn--Rossi complexes are CR invariant. The reader should note that the constructions of the Rumin and the bigraded Rumin complexes by Case deviate from constructions in the literature \cite{Rumin94,GarfieldLee1998}, but the complexes themselves are known to be equivalent.

Let $(M^{2n+1}, T^{1,0})$ be a CR manifold. We write $\Omega^{p,q}$ to denote the space of smooth (global) sections of 
    \[
        \Lambda^{p,q} := \Lambda^p (T^{1,0})^* \otimes \Lambda^q (T^{0,1})^*.
    \]
Given a (global) contact form, there is a canonical embedding $\iota:\Omega^{p,q}\inj\Omega_\C^{p+q}$. Let $(M^{2n+1},T^{1,0},\theta)$ be a pseudohermitian manifold with Reeb vector field $T$. If  $p+q \leq n$, define
    \[
    \mathcal{R}^{p,q} := 
        \left\{ \omega \in\Omega^{p+q}_\C : \theta \wedge \omega \wedge d\theta^{n+1-p-q} = 0, \theta \wedge d\omega \wedge d\theta^{n-p-q} = 0, \omega|_{H_\C}\in \Omega^{p,q} \right\},
    \]
and if $p+q \geq n+1$, define
    \[
        \mathcal{R}^{p,q} :=   
        \left\{ \omega \in \Omega^{p+q}_\C : \theta \wedge \omega = 0, \theta \wedge d\omega = 0, (T\lrcorner~\omega)|_{H_\C}\in \Omega^{p,q} \right\},
    \]
where $H_\C:=H\otimes \C$. For $p+q\leq n$, elements of $\cR^{p,q}$ are trace-free (cf. \cite{case}, Lemma 3.2.8).

By composing exterior derivative $d$ with a natural projection operator on $\cR^{p+q+1}$, one obtains the operators $\bar{\p}_b:\cR^{p,q}\r\cR^{p,q+1}$. The operator $\p_b$ is defined as the conjugate of $\bar{\p}_b$. For each $p\in \{0,\ldots, n+1\}$, we have the \emph{$p$-th Kohn--Rossi complex}
    \[
        0 \longrightarrow \cR^{p,0} \xlongrightarrow{\overline{\p}_b} \cR^{p,1} \xlongrightarrow{\overline{\p}_b} \dots \xlongrightarrow{\overline{\p}_b} \cR^{p,n} \xlongrightarrow{\overline{\p}_b} 0.
    \]
We note again that this definition of the $\overline{\partial}_b$-operator and the Kohn--Rossi complex differs from the original and standard definitions found in the literature \cite{KohnRossi65, FollandKohn72, tanaka, ChenShaw01}; see \cite{case} for details.
Given an integer $q\in [0,n]$, we set 
    \[
        H^{p,q}(M) := \frac{\ker\left(\overline{\partial}_{b} : \mathcal{R}^{p,q} \to \mathcal{R}^{p,q+1}\right)}{\mathrm{im}\left(\overline{\partial}_{b} : \mathcal{R}^{p,q-1} \to \mathcal{R}^{p,q}\right)},
    \]
where $\cR^{p,-1}:=0$ and $\cR^{p,n+1}:=0$. It is known that $H^{p,q}(M)$ is isomorphic to the Kohn--Rossi cohomology groups as defined by Tanaka in \cite{tanaka}. The analogous Hodge decomposition theorem in this setting is proved in Theorem 3.6.3 of \cite{case}, so that every Kohn--Rossi cohomological class can be represented by a unique harmonic form described in Section \ref{formulas}.

\subsection{The Weitzenb\"{o}ck Formula}\label{formulas}
In general, the Bochner technique requires a second order differential operator and a Weitzenb\"{o}ck type formula to obtain a curvature term. The sign of the curvature is then used to control the topology of the underlying manifold by applying Hodge theory. We introduce the notation and related operators to describe harmonic forms in this subsection.

Given an admissible coframe $\{\theta, \theta^1,\ldots, \theta^n, \theta^{\bar{1}},\ldots, \theta^{\bar{n}}\}$, we shall use the multi-index  $A=(\alpha_1,\ldots,\alpha_p)$ and $B=(\beta_1,\ldots, \beta_q)$ to define
\[
    \theta^A :=  \theta^{\alpha_1}\wedge \ldots\wedge \theta^{\alpha_n},\qquad \theta^{\bar{B}} := \theta^{\bar{\beta}_1}\wedge \ldots, \theta^{\bar{\beta}_q}.
\]
We write $A'$ and $B'$ to denote indices of length $p-1$ and $q-1$ respectively. When expressing a contraction $\Lambda^{0,1}\otimes \Lambda^{p,q}\r \Lambda^{p-1,q}$, we will identify $A$ with $(\alpha,A')$ to clearly indicate the contraction of a single index. The use of square brackets on upper (or lower) indices will denote antisymmetrization in the upper (respectively lower) indices of that type only.

Let $(M^{2n+1},T^{1,0}, \theta)$ be a pseudohermitian manifold. The Tanaka--Webster connection induces the following operators:
\begin{align*}
    \nabla_b:\Omega^{p,q}   &\r \Omega^{1,0}M\otimes\Omega^{p,q}M\\
    \nabla_b\omega          &:=\frac{1}{p!q!}\nabla_\gamma\omega_{A\bar{B}}\,\theta^\gamma\otimes\theta^A\otimes\theta^{\bar{B}},\\
    \overline{\nabla}_b:\Omega^{p,q}   &\r \Omega^{0,1}M\otimes\Omega^{p,q}M\\
    \overline{\nabla}_b\omega          &:=\frac{1}{p!q!}\nabla_{\bar{\sigma}}\omega_{A\bar{B}}\,\theta^{\bar{\sigma}}\otimes\theta^A\otimes\theta^{\bar{B}},
\end{align*}
where $\omega = \frac{1}{p!q!}\omega_{A\bar{B}}\,\theta^A\wedge\theta^{\bar{B}}$.

We note that Levi form induces a (pointwise) \emph{Hermitian inner product} on $\Omega^{p,q}$ by
    \[
        \ip{\omega}{\tau}:=\frac{1}{p!q!}\omega_{A\bar{B}}\bar{\tau}^{\bar{B}A},
    \]
for all $\omega = \frac{1}{p!q!}\omega_{A\bar{B}}\,\theta^A\wedge\theta^{\bar{B}}$ and $\tau = \frac{1}{p!q!}\tau_{A\bar{B}}\,\theta^A\wedge\theta^{\bar{B}}$ in $\Omega^{p,q}$, where $\bar{\tau}_{A\bar{B}}:=(-1)^{pq}\overline{\tau_{B\bar{A}}}$. For compactly supported elements $\omega$ and $\tau$ of $\Omega^{p,q}$, we have the induced \emph{$L^2$-inner product}
    \[
        (\omega,\tau):= \frac{1}{n!}\int_M\ip{\omega}{\tau}\,\theta\wedge d\theta^n.
    \]
The formal $L^2$-adjoint of the operators $\p_b,\overline{\p}_b, \nabla_b,\overline{\nabla}_b$ are denoted by $\p_b^*,\overline{\p}_b^*, \nabla_b^*,\overline{\nabla}_b^*$ respectively.

The \emph{Kohn Laplacian} $\Box_b:\cR^{p,q} \r \cR^{p,q}$ is defined by
\eqn{
                \Box_b\omega    :=   \frac{n-p-q}{n-p-q+1}\overline{\p}_b\overline{\p}_b^*+\overline{\p}_b^*\overline{\p}_b
    }
for $p+q\leq n-1$. We say that a $(p,q)$-form $\omega\in \cR^{p,q}$ is \emph{harmonic} if $\Box_b\omega=0$. 

\begin{rem}
    This definition of the Kohn Laplacian is a modification of existing definitions in the literature \cite{FollandKohn72,GarfieldLee1998, tanaka}; nevertheless, it has the same kernel as other definitions and it carries additional desired properties (cf. Section 3.4 \cite{case}).
\end{rem}

The formulas of \cite{case} Proposition 3.4.11 and Corollary 3.4.13 are presented in the following proposition and are essential ingredients in the proof of our results.
\begin{prop}\label{mainprop}
    Let $(M^{2n+1},T^{1,0},\theta)$ be a pseudohermitian manifold. Then
        \begin{align}
            \Box_b\omega &=\frac{q}{n}\nabla_b^*\nabla_b\omega + \frac{n-q}{n}\overline{\nabla}_b^*\overline{\nabla}_b\omega-\frac{1}{n-p-q+1}(\overline{\p}_b\overline{\p}^*_b+\p_b\p^*_b)\omega\label{13.11}\\
            &\quad-R\hashbarhash\omega - \frac{q}{n}\Ric\hash\omega-\frac{n-q}{n}\Ric\barhash\omega\nonumber\\
            &= \frac{(q-1)(n-p-q)}{n(n-p-q+2)}\nabla_b^*\nabla_b\omega + \frac{(n-q+1)(n-p-q)}{n(n-p-q+2)}\overline{\nabla}_b^*\overline{\nabla}_b\omega\\
            &\quad+\frac{1}{n-p-q+2}(\p^*_b\p_b+\overline{\p}^*_b\overline{\p}_b)\omega\nonumber\\
            &\quad-\frac{n-p-q}{n-p-q+2}R\hashbarhash\omega - \frac{(q-1)(n-p-q)}{n(n-p-q+2)}\Ric\hash\omega\nonumber\\
            &\quad-\frac{(n-q+1)(n-p-q)}{n(n-p-q+2)}\Ric\barhash\omega\nonumber 
        \end{align}
        for all $\omega\in \mathcal{R}^{p,q}, p+q\leq n-1$, where
    \begin{align}\label{operators}
        R\hashbarhash\omega :&= \frac{pq}{p!q!}\tensor{R}{_{\alpha\bar{\beta}}^{\bar{\nu}\mu}}\omega_{\mu A'\bar{\nu}\bar{B}'}\,\theta^A\wedge\theta^{\bar{B}},\nonumber\\
        \Ric\hash\omega :&= -\frac{p}{p!q!}\tensor{R}{_{\alpha}^{\mu}}\omega_{\mu A'\bar{B}}\,\theta^A\wedge\theta^{\bar{B}},\\
        \Ric\barhash\omega :&= -\frac{q}{p!q!}\tensor{R}{^{\bar{\nu}}_{\bar{\beta}}}\omega_{A\bar{\nu}\bar{B}'}\,\theta^A\wedge\theta^{\bar{B}}\nonumber.
    \end{align}
\end{prop}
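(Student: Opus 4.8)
The plan is to reduce both identities to the Tanaka--Webster connection and its commutation formulas, in the style of a Bochner--Kodaira--Nakano computation transplanted to the CR setting. First I would rewrite $\overline{\p}_b$ and $\p_b$ on $\cR^{p,q}$ in terms of the covariant derivatives $\overline{\nabla}_b$ and $\nabla_b$: since $\nabla$ preserves the bigrading and the Levi form, and since the torsion of $\nabla$ vanishes on pairs of vectors of the same type, $\overline{\p}_b\omega$ is, up to a combinatorial constant, the antisymmetrization of $\overline{\nabla}_b\omega$ over its $q+1$ barred slots, corrected only by a trace term that projects back onto the trace-free subspace. Dualizing with respect to the $L^2$-inner product — using that $\nabla$ preserves the Levi form, so that integration by parts identifies $\overline{\p}_b^*$ and $\p_b^*$ with the corresponding divergences of $\overline{\nabla}_b$ and $\nabla_b$ — one then has $\Box_b\omega$, $\p_b^*\p_b\omega$ and $\overline{\p}_b^*\overline{\p}_b\omega$ each written as an explicit second-order expression in $\nabla$.

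Next I would expand these compositions and commute the covariant derivatives. One meets both orderings $\nabla_\gamma\nabla_{\bar\sigma}$ and $\nabla_{\bar\sigma}\nabla_\gamma$ acting on the components $\omega_{A\bar B}$, and the pseudohermitian commutation identity for $[\nabla_\alpha,\nabla_{\bar\beta}]$ converts their difference into (i) a curvature contribution built from $R_{\alpha\bar\beta\mu\bar\nu}$, (ii) a Reeb-direction term $\nabla_0=\nabla_T$ weighted by $h_{\alpha\bar\beta}$ (reflecting the torsion of $\nabla$ in the Reeb direction), and (iii) pseudohermitian-torsion terms. Tracking whether a given curvature contraction lands on a free index of $\omega$ or on a summed pair of indices, and then repackaging via the symmetries \eqref{symmetries} together with the definitions of $R_{\alpha\bar\beta}$ and $R$, the curvature piece assembles exactly into the operators $R\hashbarhash\omega$, $\Ric\hash\omega$ and $\Ric\barhash\omega$ of \eqref{operators}.

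The delicate point — and the step I expect to be the main obstacle — is that the specific coefficients $\tfrac{n-p-q}{n-p-q+1}$, $\tfrac{q}{n}$, $\tfrac{n-q}{n}$ occurring in $\Box_b$ and in the first displayed formula are chosen precisely so that, upon forming the prescribed combination, the $\nabla_0$-terms, the pseudohermitian-torsion terms, and the trace-correction terms produced above all cancel; here one uses crucially that every $\omega\in\cR^{p,q}$ with $p+q\leq n-1$ is trace-free. Carrying this out is a careful but essentially mechanical accounting of antisymmetrization constants and index contractions, and it yields the first identity.

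Finally, I would deduce the second identity from the first by one more commutation argument: commuting covariant derivatives gives an auxiliary relation expressing $\nabla_b^*\nabla_b\omega$ in terms of $\overline{\nabla}_b^*\overline{\nabla}_b\omega$, $\p_b^*\p_b\omega+\overline{\p}_b^*\overline{\p}_b\omega$ and curvature; substituting this into the first formula to trade the $\nabla_b^*\nabla_b$ term for the others and collecting coefficients produces the rebalanced expression with denominators $n(n-p-q+2)$. Since the statement is precisely Proposition~13.11 and Corollary~13.13 of \cite{case}, one may of course also simply invoke that reference.
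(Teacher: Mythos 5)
The paper itself offers no proof of this proposition: it is stated verbatim as an import of Proposition~13.11 and Corollary~13.13 of \cite{case}, and the text before the statement says exactly that. Your proposal therefore takes a genuinely different (and more ambitious) route, namely sketching the actual derivation. Your outline matches the standard argument in the reference: express $\overline{\p}_b$, $\p_b$ as antisymmetrized covariant derivatives plus trace corrections, integrate by parts using that the Tanaka--Webster connection preserves the Levi form and the volume form $\theta\wedge d\theta^n$, and then use the pseudohermitian commutation identities, which produce the curvature operators of \eqref{operators} together with $\nabla_0$-terms and pseudohermitian torsion terms. You correctly identify the crux: the coefficients $\tfrac{n-p-q}{n-p-q+1}$, $\tfrac{q}{n}$, $\tfrac{n-q}{n}$ are tuned so that the Reeb and torsion contributions cancel on trace-free forms, and the second identity requires one further $\nabla_0$-free relation among $\nabla_b^*\nabla_b$, $\overline{\nabla}_b^*\overline{\nabla}_b$, $\p_b^*\p_b+\overline{\p}_b^*\overline{\p}_b$ and curvature (such a relation is exactly what the difference of the two displayed formulas encodes, so your plan is consistent). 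The one caveat is that your sketch leaves the coefficient bookkeeping --- the genuinely delicate part --- unverified, so as a standalone proof it is incomplete; but since you also note that one may simply invoke \cite{case}, your treatment is at least as complete as the paper's, which does precisely that and nothing more.
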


\section{Proof of Theorem 1.1}
We begin with a simple algebraic lemma on antisymmetrization of tensors. 
\begin{lemma}\label{antisymm}
    Let $T_{i_1\cdots i_k}$ be a $(0,k)$-tensor field, $k\geq 1$, then 
        \begin{equation}
            T^{i_1\cdots i_k}T_{[i_1\cdots i_k]}=|T_{[i_1\cdots i_k]}|^2.
        \end{equation}
    Furthermore, if $T_{i_1\cdots i_k}$ is antisymmetric in the last $k-1$ entries, then its antisymmetrization $T_{[i_1\cdots i_k]}$ satisfies
        \begin{equation}\label{antisymmformula}
           kT_{[i_1\cdots i_k]} = T_{i_1\cdots i_k} - T_{i_2i_1i_3\cdots i_k} - \cdots - T_{i_ki_2\cdots i_{k-1}i_1}.  
        \end{equation}
\end{lemma}
\mycomment{
    \begin{proof}
     By the definition of $T_{[i_1\cdots i_k]}$,
        \eqn{
            k!T_{[i_1\cdots i_k]}    &=  \sum_{\sigma\in S_n}\sgn(\sigma)T_{\sigma(i_1)\cdots \sigma(i_k)}\\
            &= \sum_{\sigma\in S_{n-1}}\sgn(\sigma)T_{i_1\sigma(i_2)\cdots \sigma(i_k)}-\cdots - \sum_{\sigma\in S_{n-1}}\sgn(\sigma)T_{i_k\sigma(i_2)\cdots \sigma(i_1)}\\
                                        &= (k-1)!T_{i_1\cdots i_k} - (k-1)!T_{i_2i_1i_3\cdots i_k} - \cdots - (k-1)!T_{i_ki_2i_3\cdots i_{k-1}i_1},
        }
        where the last line uses the fact that antisymmetrization is a projection operator.
    \end{proof}
}

We are now ready to work towards the main theorem.
\begin{lemma}\label{mainlemma}
    Let $(M^{2n+1},T^{1,0},\theta)$ be a strictly pseudoconvex pseudohermitian manifold with non-negative pseudohermitian curvature operator. If $\omega\in \mathcal{R}^{p,q}$ and $N, M_1,$ and $M_2$ are non-negative real numbers such that $N=M_1+M_2$, then 
    \begin{equation}\label{mainineq}
                \ip{- NR\hashbarhash\omega - M_1\Ric\hash\omega - M_2\Ric\barhash\omega}{\omega} \geq0.
    \end{equation}
    Furthermore, equality holds at $x$ if and only if $\mathfrak{R}\omega|_x= 0$.
\end{lemma}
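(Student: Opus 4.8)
The plan is to diagonalize the pseudohermitian curvature operator $\frakR$ pointwise using the orthonormal eigenbasis $\{E^{(i)}\}$ from \eqref{diag}, substitute into the three operators defined in \eqref{operators}, and reorganize the resulting sum into a manifestly non-negative expression weighted by the eigenvalues $\lambda_i \geq 0$. Concretely, I would first write $R_{\alpha\bar\beta\mu\bar\nu} = -\sum_i \lambda_i E_{\alpha\bar\beta}E_{\mu\bar\nu}$ and compute each pairing $\ip{R\hashbarhash\omega}{\omega}$, $\ip{\Ric\hash\omega}{\omega}$, $\ip{\Ric\barhash\omega}{\omega}$ in terms of the $E^{(i)}$. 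For the Ricci terms one contracts \eqref{diag}: $R_{\alpha\bar\beta} = -\sum_i \lambda_i E_{\alpha\bar\mu}\tensor{E}{_\beta^\mu}$ (using the symmetries \eqref{diagsymm} to put the trace in the right slots), so that $\ip{\Ric\hash\omega}{\omega}$ and $\ip{\Ric\barhash\omega}{\omega}$ become sums over $i$ of $\lambda_i$ times squared-norms of certain contractions of $\omega$ with $E^{(i)}$. The term $\ip{R\hashbarhash\omega}{\omega}$ similarly becomes $\sum_i \lambda_i$ times a quadratic expression in $\omega$ and $E^{(i)}$, but this one is a genuine bilinear pairing of two different contractions rather than an obvious square.

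The key algebraic step is to recognize that, after fixing an index $i$ and writing $E = E^{(i)}$, the combined quantity
\[
    -N\,\langle R\hashbarhash\omega,\omega\rangle_i - M_1\,\langle\Ric\hash\omega,\omega\rangle_i - M_2\,\langle\Ric\barhash\omega,\omega\rangle_i
\]
(the $i$-th summand, before multiplying by $\lambda_i$) is itself non-negative, and in fact is a sum of squared norms. This is where Lemma \ref{antisymm} enters: the contraction $\tensor{E}{_\alpha^\mu}\omega_{\mu A'\bar B}$ need not be antisymmetric in its free lower $T^{1,0}$-indices, but $\omega_{\mu A'\bar B}$ is antisymmetric in the $A' = (\alpha_2,\dots,\alpha_p)$ block, so Lemma \ref{antisymm} applies to the tensor obtained after the contraction, letting one replace an antisymmetrized contraction by an explicit alternating sum and identify the relevant pairing with $|\text{(antisymmetrization)}|^2$. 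Using $N = M_1 + M_2$, the cross term $\langle R\hashbarhash\omega,\omega\rangle_i$ should combine with the two Ricci contributions so that the $\langle R\hashbarhash\omega,\omega\rangle_i$ piece splits as $M_1$ times something plus $M_2$ times something, and each of those combines with the corresponding $\Ric$ term into a perfect square — roughly $M_1 |\tensor{E}{_{[\alpha}^\mu}\omega_{|\mu| A']\bar B}|^2 + M_2|\text{(conjugate-type square)}|^2$, up to the precise index bookkeeping dictated by \eqref{operators}.

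Once the $i$-th summand is exhibited as a non-negative combination of squares, multiplying by $\lambda_i \geq 0$ and summing over $i$ gives \eqref{mainineq}. For the equality case: if the left side vanishes at $x$, then for every $i$ with $\lambda_i > 0$ all the relevant squared contractions of $\omega$ with $E^{(i)}$ vanish at $x$; tracing through the definition of $\frakR\omega$ (i.e.\ the action of the curvature operator on $\omega$, obtained by applying $\frakR$ as a symmetric endomorphism and contracting two slots with $\omega$ as in \eqref{operators}), one checks that $\frakR\omega|_x$ is built precisely out of these vanishing quantities weighted by the $\lambda_i$, so $\frakR\omega|_x = 0$; the converse is immediate since $\frakR\omega|_x = 0$ forces every eigencomponent contribution to vanish. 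The main obstacle I anticipate is purely combinatorial: matching the antisymmetrization indices in \eqref{operators} with the output of Lemma \ref{antisymm} correctly, and verifying that the coefficients $N, M_1, M_2$ with the single relation $N = M_1 + M_2$ are exactly what is needed for the cross terms to cancel and leave only squares — in particular confirming that no extra positivity hypothesis on $M_1/M_2$ individually is required once both are non-negative.
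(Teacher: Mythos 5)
Your strategy is essentially the paper's: diagonalise $\frakR$ pointwise via \eqref{diag}, reduce to a single eigenvector $E$ using $\lambda_i\geq 0$, move indices with the symmetries \eqref{diagsymm}, split the $R\hashbarhash$ term according to $N=M_1+M_2$, and combine each half with the matching Ricci term so that Lemma \ref{antisymm} exhibits the $i$-th summand as $M_1(q+1)^2p|\tilde{\Omega}|^2+M_2(p+1)^2q|\Omega|^2$. Your observation that no hypothesis beyond $M_1,M_2\geq 0$ is needed is also confirmed by the paper.

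However, one ingredient you omit is not ``purely combinatorial bookkeeping'': the fact that elements of $\cR^{p,q}$ with $p+q\leq n$ are \emph{trace-free} (recalled in the background, citing Lemma 11.8 of \cite{case}). After the splitting, the $M_1$-summand is a pairing of $\tensor{E}{_\alpha^{\mu}}\omega_{\mu A'\bar{\nu}\bar{B}'}$ --- $E$ contracted into an \emph{unbarred} slot of $\omega$ --- against $\tilde{\Omega}=\tensor{E}{^{[\bar{\nu}}_{\bar{\delta}}}\bar{\omega}^{\bar{\delta}\bar{B}']\alpha A'}$ --- an antisymmetrisation of $E$ contracted into a \emph{barred} slot of $\bar{\omega}$. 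These are not a tensor and its own antisymmetrisation, so the final claim of Lemma \ref{antisymm}, $T^{i_1\cdots i_k}T_{[i_1\cdots i_k]}=|T_{[i_1\cdots i_k]}|^2$, does not apply directly. The bridge is \eqref{Eq: Trace-free conditions}: antisymmetrising $\tensor{E}{_{\alpha\bar{\mu}}}\tensor{\omega}{^{\bar{\mu}}_{A'\bar{\nu}\bar{B}'}}$ over all $q+1$ barred slots produces, via \eqref{antisymmformula}, the original term minus $q$ terms each containing a trace of $\omega$, and those vanish precisely because $\omega$ is trace-free; only then is the first factor identified with $(q+1)$ times the conjugate of $\tilde{\Omega}$ and the pairing with $(q+1)|\tilde{\Omega}|^2$. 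The same identity, read backwards, is what converts $\tilde{\Omega}|_x=\Omega|_x=0$ (for the eigenvectors with $\lambda_i>0$) into the vanishing of the full contractions of $E^{(i)}$ with $\omega$, i.e.\ into $\frakR\omega|_x=0$ in the equality case. You should make this step explicit; without it the perfect squares do not close up. (Two minor points: your restriction to $\lambda_i>0$ in the equality discussion is actually slightly more careful than the paper's phrasing; and your guess that the $M_1$-square is the one antisymmetrised over the unbarred block has the two squares swapped relative to \eqref{Eq: positivity}, which is indeed just bookkeeping.)
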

\begin{proof}
Applying the operator definitions in \eqref{operators} and taking inner product determined by the Levi form, the left hand side of \eqref{mainineq} is a positive constant multiple of
    \eqn{
        -Npq\tensor{R}{_{\alpha\bar{\beta}}^{\bar{\nu}\mu}}\omega_{\mu A'\bar{\nu}\bar{B}'}\bar{\omega}^{\bar{\beta}\bar{B}'\alpha A'} + 
        M_1 p\tensor{R}{_{\alpha}^{\mu}}\omega_{\mu A'\bar{B}}\bar{\omega}^{\bar{B}\alpha A'} 
        + M_2q\tensor{R}{^{\bar{\nu}}_{\bar{\beta}}}\omega_{A\bar{\nu}\bar{B}'}\bar{\omega}^{\bar{\beta}\bar{B}'A}.
    }
Assuming non-negative pseudohermitian curvature operator, keeping track of our sign convention as noted in Remark \ref{signremark}, and applying \eqref{diag}, we arrive at 
    \begin{align}
&Npq\sum_i\lambda_iE_{\alpha\bar{\beta}}E^{\bar{\nu}\mu}\omega_{\mu A'\bar{\nu}\bar{B}'}\bar{\omega}^{\bar{\beta}\bar{B}'\alpha A'} - M_1p\sum_i\lambda_iE_{\alpha\bar{\delta}}E^{\bar{\delta}\mu}\omega_{\mu A'\bar{B}}\bar{\omega}^{\bar{B}\alpha A'}\label{maineq}
        \\&\qquad\qquad-M_2q\sum_i\lambda_iE^{\bar{\nu}\delta}E_{\delta\bar{\beta}}\omega_{A\bar{\nu}\bar{B}'}\bar{\omega}^{\bar{\beta}\bar{B}'A}.\nonumber
    \end{align}
    Since each $\lambda_i\geq 0$, it suffices to consider the sign of each term in the summands over $i$. Applying the symmetries of those in \eqref{diagsymm} to the summands in \eqref{maineq}, we have
    \eqn{
        &Npq\tensor{E}{_{\alpha}^\mu}\tensor{E}{^{\bar{\nu}}_{\bar{\beta}}}
        \omega_{\mu A'\bar{\nu}\bar{B}'}\bar{\omega}^{\bar{\beta}\bar{B}'\alpha A'} - 
        M_1p\tensor{E}{^{\bar{\delta}}_{\bar{\delta}}}\tensor{E}{_\alpha^{\mu}}\omega_{\mu A'\bar{B}}\bar{\omega}^{\bar{B}\alpha A'}\\&\qquad\qquad - 
        M_2q\tensor{E}{^{\bar{\nu}}_{\bar{\beta}}}\tensor{E}{_{\delta}^\delta}\omega_{A\bar{\nu}\bar{B}'}\bar{\omega}^{\bar{\beta}\bar{B}'A}.
    }
    Using the splitting $N=M_1+M_2$ and an application of Lemma \ref{antisymm} gives
    \begin{align}
        &Npq\tensor{E}{_{\alpha}^\mu}\tensor{E}{^{\bar{\nu}}_{\bar{\beta}}}
        \omega_{\mu A'\bar{\nu}\bar{B}'}\bar{\omega}^{\bar{\beta}\bar{B}'\alpha A'} - 
        M_1p\tensor{E}{^{\bar{\delta}}_{\bar{\delta}}}\tensor{E}{_\alpha^{\mu}}\omega_{\mu A'\bar{B}}\bar{\omega}^{\bar{B}\alpha A'}\nonumber
        \\&\qquad\qquad - 
        M_2q\tensor{E}{^{\bar{\nu}}_{\bar{\beta}}}\tensor{E}{_{\delta}^\delta}\omega_{A\bar{\nu}\bar{B}'}\bar{\omega}^{\bar{\beta}\bar{B}'A}\nonumber
        \\ = &
        -M_1p\tensor{E}{_\alpha^{\mu}}\omega_{\mu A'\bar{\nu}\bar{B}'}
        (-q\tensor{E}{^{\bar{\nu}}_{\bar{\beta}}}\bar{\omega}^{\bar{\beta}\bar{B}'\alpha A'} + 
        \tensor{E}{^{\bar{\delta}}_{\bar{\delta}}}\bar{\omega}^{\bar{\nu}\bar{B}'\alpha A'})
        \\&\qquad\qquad - 
        M_2q\tensor{E}{^{\bar{\nu}}_{\bar{\beta}}}\bar{\omega}^{\bar{\beta}\bar{B}'\alpha A'}
        (-p\tensor{E}{_{\alpha}^\mu}\omega_{\mu A'\bar{\nu}\bar{B}'} \nonumber + 
        \tensor{E}{_{\delta}^\delta}\omega_{\alpha A'\bar{\nu}\bar{B}'})
        \\ = &\label{lastline}
        M_1(q+1)p\tensor{E}{_\alpha^{\mu}}\omega_{\mu A'\bar{\nu}\bar{B}'}
        (\tensor{E}{^{[\bar{\nu}}_{\bar{\delta}}}\bar{\omega}^{\bar{\delta}\bar{B}']\alpha A'})
        \\&\qquad\qquad + 
        M_2(p+1)q\tensor{E}{^{\bar{\nu}}_{\bar{\beta}}}\bar{\omega}^{\bar{\beta}\bar{B}'\alpha A'}
        (\tensor{E}{_{[\alpha}^\delta}\omega_{\delta A']\bar{\nu}\bar{B}'})\nonumber,
    \end{align}
where in \eqref{lastline} we introduced a negative sign by switching one pair of indices after antisymmetrization. Since $\omega\in \cR^{p,q}$ is trace-free, we can write
\begin{equation}\label{Eq: Trace-free conditions}
    \begin{split}
    \tensor{E}{_\alpha^{\mu}}\omega_{\mu A'\bar{\nu}\bar{B}'} &= 
    \tensor{E}{_{\alpha\bar{\mu}}}\tensor{\omega}{^{\bar{\mu}}_{A'\bar{\nu}\bar{B}'}}=
    (q+1)\tensor{E}{_{\alpha[\bar{\mu}}}\tensor{\omega}{^{\bar{\mu}}_{A'\bar{\nu}\bar{B}']}}
    \\
    \tensor{E}{^{\bar{\nu}}_{\bar{\beta}}}\bar{\omega}^{\bar{\beta}\bar{B}'\alpha A'} &=
    \tensor{E}{^{\bar{\nu}\beta}}\tensor{\bar{\omega}}{_{\beta}^{\bar{B}'\alpha A'}}=
    (p+1)\tensor{E}{^{\bar{\nu}[\beta}}\tensor{\bar{\omega}}{_{\beta}^{\bar{B}'\alpha A']}}.
    \end{split}
\end{equation}
Setting $\tilde{\Omega}:=\tensor{E}{^{[\bar{\nu}}_{\bar{\delta}}}\bar{\omega}^{\bar{\delta}\bar{B}']\alpha A'}$ and $\Omega:=\tensor{E}{_{[\alpha}^\delta}\omega_{\delta A']\bar{\nu}\bar{B}'}$ and using Lemma \ref{antisymm}, the expression \eqref{lastline} simplifies to
    \begin{equation}\label{Eq: positivity}
        M_1(q+1)^2p|\tilde{\Omega}|^2+M_2(p+1)^2q|\Omega|^2,
    \end{equation}    
    which is evidently non-negative. 
    
    For equality to hold in \eqref{mainineq} at a point $x\in M$, \eqref{Eq: positivity} implies $\tilde{\Omega}|_x=\Omega|_x=0$ for every basis vector $E$. In particular, the relations in \eqref{Eq: Trace-free conditions} imply that $\frakR\omega|_x= 0$.
\end{proof}
We can now apply the Bochner technique and obtain our main result.
\begin{proof}[Proof of Theorem \ref{mainthm}]
    Let $(M^{2n+1},T^{1,0},\theta)$ be a pseudohermitian manifold with non-negative pseudohermitian curvature operator. Let $0\leq p\leq n+1$ and $0<q<n$ be integers such that $p+q\notin \{n,n+1\}$. Let $\omega\in \mathcal{R}^{p,q}$ be a harmonic $(p,q)$-form. Then
        \[
            0 = \int_M \ip{\Box_b\omega}{\omega} \theta\wedge d\theta^n.
        \]
    The Hodge star operator induces an isomorphism $H^{p,q}\cong H^{n+1-p,n-q}$ (cf. \cite{case}, Corollary 3.6.8), so it suffices to prove the theorem assuming $p+q\leq n-1$. Using Proposition \ref{mainprop} and Lemma \ref{mainlemma}, we have
        \begin{equation}\label{bochnereq}
            \begin{split}
            0 &= \int_M\left((q-1)|\nabla_b\omega|^2 
            + (n-q+1)|\overline{\nabla}_b\omega|^2\right.
            +\frac{1}{n-p-q}(|\p_b\omega|^2+|\overline{\p}_b\omega|^2)
            \\&\qquad
            +\left\langle-nR\hashbarhash\omega - (q-1)\Ric\hash\omega\left.-(n-q+1)\Ric\barhash\omega,\omega\right\rangle\right)\theta\wedge d\theta^n\\
            &\geq 0.
            \end{split}
        \end{equation}
    It follows immediately that $\overline{\nabla}_b\omega = \p_b\omega=\overline{\p}_b\omega = 0$. We can further deduce that $\nabla_b\omega=0$: if $q\neq 1$, it is clear from \eqref{bochnereq}, and if $q=1$, apply Lemma \ref{mainlemma} to \eqref{13.11} and use the Bochner technique as in \eqref{bochnereq}. We conclude that the inner product in \eqref{bochnereq} is zero and applying Lemma \ref{mainlemma} again yields $\frakR(\omega)=0$.
    
    Lastly, since the harmonic representative $\omega$ being parallel means its values are determined by its behaviour at a single point, applying the Hodge theorem provides the desired bound on the Betti numbers. 
\end{proof}

\section{Acknowledgments}
I would like to thank Jeffrey Case for bringing this topic to my attention, and providing many helpful discussions and comments throughout the preparation of this paper.

%emph setting for bib
\DeclareEmphSequence{\itshape}
\bibliographystyle{amsalpha}
\bibliography{references}
\end{document}